\documentclass[12pt,leqno]{amsart}
\usepackage{amsmath,amsfonts,latexsym,graphicx,amssymb,url, color}
\usepackage{hyperref}
\usepackage{txfonts} 
\newcommand{\average}{{\mathchoice {\kern1ex\vcenter{\hrule
height.4pt width 6pt depth0pt} \kern-9.7pt}
{\kern1ex\vcenter{\hrule height.4pt width 4.3pt depth0pt}
\kern-7pt} {} {} }}

\setlength{\headheight}{15pt} \setlength{\topmargin}{10pt}
\setlength{\headsep}{30pt} %separation head from text
\setlength{\textwidth}{15cm} \setlength{\textheight}{21.5cm}
\setlength{\oddsidemargin}{1cm} %odd pages
\setlength{\evensidemargin}{1cm} %even pages

 %distance
\newcommand{\abs}[1]{\left\vert#1\right\vert}

\newcommand{\R}{{\mathbb R}} %%reals

\newcommand{\e}{\varepsilon}

%%%%%%%%%%%%%%%%%%%%%%%%%%%%%%%%%%%%%%%Bruno's defs

\newcommand{\p}{\partial}

\newcommand{\norm}[1]{\left\Vert#1\right\Vert}

\newenvironment{myindentpar}[1]%
{\begin{list}{}%
         {\setlength{\leftmargin}{#1}}%
         \item[]%
}
{\end{list}}
%

%%%%%%%%%%%%%%%%%%%%%%%%%%%%%%end of Bruno's defs

%\renewcommand{\baselinestretch}{1.3} %%2 means double spaced; 1.5 means ...
\theoremstyle{plain}
\newtheorem{theorem}{Theorem}[section]

\newtheorem{remark}[theorem]{Remark}

%%%%formula number acc. to sections: (1.1),(1.2)...(2.1),(2.2)... %%%%

\title[Boundary H\"older gradient estimates for linearized Monge-Amp\`ere]{
On boundary H\"older gradient estimates for solutions to the linearized Monge-Amp\`ere equations
}
\author{Nam Q. Le}
\address{Department of Mathematics, Columbia University, New York, NY 10027, USA}
\email{namle@math.columbia.edu}
\curraddr{Institute of Mathematics, Vietnam Academy of Science and Technology, 18 Hoang Quoc Viet, 10307 Hanoi, Vietnam}
\author{Ovidiu Savin}
\address{Department of Mathematics, Columbia University, New York, NY 10027, USA}
\email{savin@math.columbia.edu}
\thanks{Mathematics Subject Classification (2010): 35J70, 35B65, 35B45, 35J96.}
\thanks{Keywords and phrases: linearized Monge-Amp\`ere equations, localization theorem, boundary gradient estimates}

 \begin{document}
%\date{\today}
 \maketitle

\begin{abstract}
In this paper, we establish boundary H\"older gradient estimates for solutions to the linearized Monge-Amp\`ere equations with $L^{p}$ ($n<p\leq\infty$)
right hand side and $C^{1,\gamma}$ boundary values
under natural assumptions on the domain, boundary data and the
Monge-Amp\`ere measure. These estimates extend our previous boundary regularity results for solutions to the linearized Monge-Amp\`ere equations
with bounded 
right hand side and $C^{1, 1}$ boundary data. 
\end{abstract}
%\date{}
%\pagenumbering{arabic}
\noindent

\section{Statement of the main results}
In this paper, we establish boundary H\"older gradient estimates for solutions to the linearized Monge-Amp\`ere equations 
with $L^{p}$ ($n<p\leq\infty$)
right hand side and $C^{1,\gamma}$ boundary values under 
natural assumptions on the domain, boundary data and the
Monge-Amp\`ere measure. 
Before stating these estimates, we introduce 
the following assumptions on the domain $\Omega$ and function
$\phi$.

Let $\Omega\subset \R^{n}$ be a bounded convex set with
\begin{equation}\label{om_ass}
B_\rho(\rho e_n) \subset \, \Omega \, \subset \{x_n \geq 0\} \cap B_{\frac 1\rho},
\end{equation}
for some small $\rho>0$. Assume that 
\begin{equation}
\Omega~ \text{contains an interior ball of radius $\rho$ tangent to}~ \p 
\Omega~ \text{at each point on} ~\p \Omega\cap\ B_\rho.
\label{tang-int}
\end{equation}
Let $\phi : \overline \Omega \rightarrow \R$, $\phi \in C^{0,1}(\overline 
\Omega) 
\cap 
C^2(\Omega)$  be a convex function satisfying
\begin{equation}\label{eq_u}
 0 <\lambda \leq \det D^2\phi \leq \Lambda \quad \text{in $\Omega$}.
\end{equation}
Throughout, we denote by $\Phi = (\Phi^{ij})$ the matrix of cofactors of the 
Hessian matrix $D^{2}\phi$, 
i.e., $$\Phi = (\det D^{2} \phi) (D^{2} \phi)^{-1}.$$
We assume that on $\p \Omega\cap B_\rho$, 
$\phi$ separates quadratically from its tangent planes on $\p \Omega$. 
Precisely we assume that if $x_0 \in 
\p \Omega \cap B_\rho$ then
\begin{equation}
 \rho\abs{x-x_{0}}^2 \leq \phi(x)- \phi(x_{0})-\nabla \phi(x_{0}) (x- x_{0}) \leq 
\rho^{-1}\abs{x-x_{0}}^2,
\label{eq_u1}
\end{equation}
for all $x \in \p\Omega.$ 

Let $S_{\phi}(x_0, h)$ be the section of $\phi$ centered at $x_0\in \overline{\Omega}$ and of height $h$:
$$S_{\phi}(x_0, h):= \{x\in \overline{\Omega}: \phi(x)<\phi(x_0)+\nabla\phi(x_0)(x-x_0)+ h\}.$$
When $x_0$ is the origin, we denote for simplicity $S_h:= S_{\phi}(0, h).$

Now, we can state our boundary H\"older gradient estimates for solutions to the linearized Monge-Amp\`ere equations with $L^{p}$ right hand side and 
$C^{1,\gamma}$ boundary data.
\begin{theorem}
\label{h-bdr-gradient}
Assume $\phi$ and $\Omega$ satisfy the assumptions 
\eqref{om_ass}-\eqref{eq_u1} 
above. Let $u: B_{\rho}\cap 
\overline{\Omega}\rightarrow \R$ be a continuous solution to 
\begin{equation*}
 \left\{
 \begin{alignedat}{2}
   \Phi^{ij}u_{ij} ~& = f ~&&\text{in} ~ B_{\rho}\cap \Omega, \\\
u &= \varphi~&&\text{on}~\p \Omega \cap B_{\rho},
 \end{alignedat} 
  \right.
\end{equation*} 
where $f\in L^{p}(B_{\rho}\cap\Omega)$ for some $p>n$ and $\varphi \in C^{1,\gamma}(B_{\rho}\cap\p\Omega)$.
Then, there exist $\alpha\in (0, 1)$ and $\theta_0$ small depending only on $n, p, \rho, \lambda, \Lambda, \gamma$ such that for all $\theta\leq \theta_0$ we have 
$$\|u- u(0)-\nabla u(0)x\|_{L^{\infty}(S_{\theta})}\leq C\left(\|u\|_{L^{\infty}(B_{\rho}\cap\Omega)} + \|f\|_{L^{p}(B_{\rho}\cap\Omega)} + 
 \|\varphi\|_{C^{1,\gamma}(B_{\rho}\cap\p\Omega)}\right) (\theta^{1/2})^{1+\alpha}$$ 
where $C$ only on $n, p, \rho, \lambda, \Lambda, \gamma$. We can take $\alpha:= \min\{1-\frac{n}{p}, \gamma\}$ provided that
$\alpha<\alpha_0$
where $\alpha_0$ is the exponent in our previous boundary H\"older gradient estimates (see Theorem \ref{LS-gradient}).
\end{theorem}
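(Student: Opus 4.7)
The plan is to prove Theorem~\ref{h-bdr-gradient} by a Schauder-type iteration on the dyadic sequence of sections $\{S_{\theta^k}\}_{k\geq 0}$. At each step I construct an affine function $\ell_k$ such that $\|u-\ell_k\|_{L^\infty(S_{\theta^k})} \leq C_0 M (\theta^k)^{(1+\alpha)/2}$, where $M := \|u\|_{L^\infty(B_\rho\cap\Omega)} + \|f\|_{L^p(B_\rho\cap\Omega)} + \|\varphi\|_{C^{1,\gamma}(B_\rho\cap\p\Omega)}$ is the combined norm on the right-hand side and $C_0$ is a large constant fixed at the end. The sequence $\ell_k$ will converge to $u(0)+\nabla u(0)\cdot x$ and telescoping the increments produces the quantitative estimate; values at non-dyadic $\theta\leq\theta_0$ follow by interpolating between consecutive dyadic scales. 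The correct exponents come from the Localization Theorem, according to which $S_h$ has diameter comparable to $h^{1/2}$ and volume comparable to $h^{n/2}$, so that the natural scale of a H\"older-differentiable function of exponent $\alpha$ at the origin is $h^{(1+\alpha)/2}$.

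The decay step at scale $h=\theta^k$ proceeds by splitting $\tilde u := u-\ell_k$ as $v+w$ in $S_h\cap\Omega$, where $v$ solves the homogeneous equation $\Phi^{ij}v_{ij}=0$ with $v=\tilde u$ on $\p S_h\cap\Omega$ and $v=P_k$ on $\p\Omega\cap S_h$, with $P_k$ the affine Taylor polynomial of $\varphi-\ell_k$ at the origin. Since $P_k$ is in particular $C^{1,1}$, Theorem~\ref{LS-gradient} applies to $v$ after the affine normalization of $S_h$ and produces an affine function $L_v$ with
$$\|v-L_v\|_{L^\infty(S_{\theta h})} \leq C_1\|v\|_{L^\infty(S_h)}\,\theta^{(1+\alpha_0)/2} \leq 2C_1 C_0 M\,h^{(1+\alpha)/2}\,\theta^{(1+\alpha_0)/2}.$$
The remainder $w$ solves $\Phi^{ij}w_{ij}=f$ with boundary values vanishing on $\p S_h\cap\Omega$ and bounded by $M h^{(1+\gamma)/2}$ on $\p\Omega\cap S_h$, via $\varphi\in C^{1,\gamma}$ together with $\diam(\p\Omega\cap S_h)\lesssim h^{1/2}$. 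The Aleksandrov--Bakelman--Pucci estimate for the linearized Monge-Amp\`ere operator in $S_h$, combined with H\"older's inequality $\|f\|_{L^n(S_h)}\leq|S_h|^{1/n-1/p}\|f\|_{L^p}\lesssim M\,h^{1/2-n/(2p)}$, yields
$$\|w\|_{L^\infty(S_h)} \leq M h^{(1+\gamma)/2} + C h^{1/2}\|f\|_{L^n(S_h)} \leq C_2 M\,h^{(1+\alpha)/2},$$
with $\alpha=\min\{1-n/p,\gamma\}$ as in the statement.

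Setting $\ell_{k+1}:=\ell_k+L_v$ and combining the two displays gives
$$\|u-\ell_{k+1}\|_{L^\infty(S_{\theta h})} \leq 2C_1 C_0 M\,h^{(1+\alpha)/2}\theta^{(1+\alpha_0)/2} + C_2 M\,h^{(1+\alpha)/2}.$$
Because by hypothesis $\alpha<\alpha_0$, I first choose $\theta_0$ so small that $2C_1\theta^{(1+\alpha_0)/2}\leq \tfrac 12\theta^{(1+\alpha)/2}$ for all $\theta\leq\theta_0$, and then enlarge $C_0$ so as to absorb $C_2$ into $\tfrac 12 C_0\theta^{(1+\alpha)/2}$, which closes the induction. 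The base case $k=0$ is trivial with $\ell_0=0$ and $C_0$ chosen even larger so that $\|u\|_{L^\infty}\leq C_0 M$. The bound $\|L_v\|_{L^\infty(S_h)}\lesssim C_0 M h^{(1+\alpha)/2}$ forces $|\nabla L_v|\lesssim C_0 M h^{\alpha/2}$, so the slope increments decrease geometrically and $\ell_k$ converges to the affine function $u(0)+\nabla u(0)\cdot x$.

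The main obstacle is the rigorous application of Theorem~\ref{LS-gradient} to the comparison function $v$: after rescaling $S_h$ to unit size via the affine normalization from the Localization Theorem (which converts the quadratic separation \eqref{eq_u1} into its normalized form), one has to verify that the boundary values of $v$ on the ``interior'' piece $\p S_h\cap\Omega$ do not spoil the boundary H\"older gradient estimate at the origin and that the rescaled $C^{1,1}$ norm of $P_k$ on $\p\Omega\cap S_h$ is controlled uniformly in $k$. Once this is set up correctly, the hypothesis $\alpha<\alpha_0$ is exactly what makes the $v$-gain strictly beat the factor $\theta^{(1+\alpha)/2}$ per step, while the choice $\alpha=\min\{1-n/p,\gamma\}$ is the best exponent the boundary and right-hand-side error terms can sustain.
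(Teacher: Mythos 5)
Your proposal follows essentially the same route as the paper: a Caffarelli-type perturbation iteration over the sections $S_{\theta^k}$ (normalized via the Localization Theorem), replacement of $u-\ell_k$ by the solution of the homogeneous equation with essentially affine boundary data on $\p\Omega$, use of Theorem \ref{LS-gradient} for the improved decay $\theta^{(1+\alpha_0)/2}$, and control of the discrepancy by the ABP estimate plus H\"older's inequality, with the hypothesis $\alpha<\alpha_0$ closing the induction exactly as you describe. The only cosmetic differences are that the paper subtracts the tangential affine part of $\varphi$ at the origin once at the outset (so the homogeneous replacement has zero data on $\p\Omega$ and only the normal slopes $b_m$ need to be tracked) and normalizes by the total norm so that all error terms are measured against $\delta=(\theta^{1/2})^{1+\alpha}$.
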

Theorem \ref{h-bdr-gradient} extends our previous boundary H\"older gradient estimates 
for solutions to the linearized Monge-Amp\`ere equations
with bounded 
right hand side and $C^{1, 1}$ boundary data \cite[Theorem 2. 1]{LS1}. This is an affine invariant analogue of the boundary H\"older gradient estimates of  Ural'tseva  \cite{U1} (see also \cite{U2} for a survey) for uniformly elliptic equation with $L^{p}$ right hand side.
\begin{remark}
 By the Localization Theorem \cite{S1, S2}, we have
 $$B_{c\theta^{1/2}/\abs{log \theta}}\cap \overline{\Omega}\subset S_{\theta}\subset B_{C\theta^{1/2}\abs{log \theta}}\cap\overline{\Omega}.$$
 Therefore, Theorem \ref{h-bdr-gradient} easily implies that $\nabla u$ is $C^{0, \alpha^{'}}$ on $B_{\rho/2}\cap\p\Omega$ for all $\alpha^{'}<\alpha.$
\end{remark}

As a consequence of Theorem \ref{h-bdr-gradient}, we obtain global $C^{1,\alpha}$ estimates for solutions to the linearized Monge-Amp\`ere equations 
with $L^{p}$ ($n<p\leq\infty$)
right hand side and $C^{1,\gamma}$ boundary values under natural assumptions on the domain, boundary data and continuity of the
Monge-Amp\`ere measure.
\begin{theorem}
Assume that 
$\Omega \subset B_{1/\rho}$ contains an interior ball of radius $\rho$ 
tangent to $\p 
\Omega$ at each point on $\p \Omega.$ Let $\phi : \overline \Omega \rightarrow \R$, $\phi \in C^{0,1}(\overline 
\Omega) 
\cap 
C^2(\Omega)$  be a convex function satisfying
$$ \det D^2 \phi =g \quad \quad \mbox{with} \quad \lambda \le g \le \Lambda,\quad g\in C(\overline{\Omega}).$$ Assume further that 
on $\p \Omega$, $\phi$ 
separates quadratically from its 
tangent planes,  namely
\begin{equation*}
 \rho\abs{x-x_{0}}^2 \leq \phi(x)- \phi(x_{0})-\nabla \phi(x_{0}) (x- x_{0})
 \leq \rho^{-1}\abs{x-x_{0}}^2, ~\forall x, x_{0}\in\p\Omega.
\end{equation*}
Let $u: \overline{\Omega}\rightarrow \R$ be a continuous function that 
solves the linearized Monge-Amp\`ere equation 
\begin{equation*}
 \left\{
 \begin{alignedat}{2}
   \Phi^{ij}u_{ij} ~& = f ~&&\text{in} ~ \Omega, \\\
u &= \varphi ~&&\text{on}~\p \Omega,
 \end{alignedat} 
  \right.
\end{equation*}
where $\varphi$ is a $C^{1,\gamma}$ function defined on $\p\Omega$ $(0<\gamma\leq 1)$ and $f\in L^{p}(\Omega)$ with $p>n$. 
Then
\begin{equation*}
 \|u\|_{C^{1, \beta} (\overline \Omega )} \leq K( \|\varphi\|_{C^{1,\gamma}(\p \Omega)}+ \|f\|_{L^p(\Omega)}), 
\end{equation*}
where $\beta\in (0,1)$ and $K$ are constants depending on 
$n, \rho, \gamma, \lambda, \Lambda, p$ and the modulus of continuity of $g$.
 \label{global-reg}
\end{theorem}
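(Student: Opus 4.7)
The plan is to combine the localized boundary estimate of Theorem \ref{h-bdr-gradient}, applied at every boundary point, with known interior $C^{1,\alpha'}$ estimates for the linearized Monge-Amp\`ere equation, and then to glue the two via the affine geometry of the sections of $\phi$ coming from the Localization Theorem. This is the same scheme used in our earlier work \cite{LS1} to deduce the global $C^{1,\alpha}$ estimate under $C^{1,1}$ boundary data; here one only replaces the previous boundary lemma by Theorem \ref{h-bdr-gradient}.

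First, I would apply Theorem \ref{h-bdr-gradient} at every $z_0\in\p\Omega$. After translating $z_0$ to the origin, subtracting the tangent plane of $\phi$ at $z_0$, and rotating so that the inner normal to $\p\Omega$ at $z_0$ is $e_n$, the normalized data satisfy the hypotheses \eqref{om_ass}--\eqref{eq_u1} with a uniformly smaller $\rho$ depending only on the original $\rho$: the interior ball condition at each boundary point gives \eqref{om_ass}--\eqref{tang-int}, and the global quadratic separation hypothesis gives \eqref{eq_u1}. Since the ABP maximum principle bounds $\|u\|_{L^\infty(\Omega)}$ by $\|\varphi\|_{C^0(\p\Omega)}+\|f\|_{L^p(\Omega)}$, Theorem \ref{h-bdr-gradient} yields a uniform pointwise $C^{1,\alpha}$ expansion of $u$ at every $z_0\in\p\Omega$ on the sections $S_\phi(z_0,\theta)$, with $\alpha$ as in that theorem.

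For the interior regime, since $\det D^2\phi=g\in C(\overline\Omega)$ is trapped between $\lambda$ and $\Lambda$, Caffarelli's interior $C^{1,\tau}$ and $W^{2,p}$ theory for $\phi$, combined with the interior $C^{1,\alpha'}$ theory for the linearized Monge-Amp\`ere operator with $L^p$ right-hand side (Guti\'errez--Nguyen, together with the refinements in \cite{LS1}), provides an interior expansion of $u$ on each interior section of $\phi$, with constants depending on the modulus of continuity of $g$ and scaling correctly under the affine normalization furnished by John's lemma. This is where the modulus of continuity of $g$ enters the final constant $K$.

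The final step is the gluing. For $x\in\Omega$, let $z_0\in\p\Omega$ be a closest boundary point and set $d=\dist(x,\p\Omega)$; by the Localization Theorem, the section $S_\phi(z_0,c d)$ is trapped, up to logarithmic factors, between two ellipsoids of normal size $d^{1/2}$, so $x\in S_\phi(z_0,c d)$. Comparing the boundary expansion at $z_0$ with the interior $C^{1,\alpha'}$ estimate on the section at $x$ of height comparable to $d$, and interpolating between the two, produces a uniform H\"older modulus for $\nabla u$ across $\overline\Omega$ with some exponent $\beta>0$, in general smaller than both $\alpha$ and $\alpha'$. The main obstacle is exactly this matching at the transition scale: one must verify that the affine tangent plane $\nabla u(z_0)$ extracted from the boundary estimate is compatible, up to an error of order $d^{(1+\alpha)/2}$, with the gradient of $u$ at the interior point $x$ extracted from the interior estimate. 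This is the affine-invariant analogue of the classical gluing between interior and boundary H\"older gradient estimates on smooth domains, and the balance of the two exponents fixes the final $\beta$.
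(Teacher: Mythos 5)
Your proposal follows essentially the same route as the paper: ABP for the sup bound, Theorem \ref{h-bdr-gradient} applied at each (normalized) boundary point, Guti\'errez--Nguyen's interior $C^{1,\alpha'}$ estimate applied to the rescaling of $u$ minus its boundary tangent plane on the maximal interior section $S_{\phi}(y,h)$ touching $\p\Omega$, and the gluing through the affine geometry of sections from \cite[Proposition 3.2]{LS1}. One quantitative correction to your ``main obstacle'': since a section of height $t$ has Euclidean size $\sim t^{1/2}$ (up to $|\log t|$ factors), the section relevant to a point $x$ at distance $d$ from $\p\Omega$ has height $h\sim d^2$ (not $\sim d$), so the boundary estimate gives $\|u-l_{z_0}\|_{L^{\infty}}\lesssim (d|\log d|)^{1+\alpha}$ there and hence a gradient-matching error of order $d^{\alpha}|\log d|^{2+\alpha}$ rather than $d^{(1+\alpha)/2}$ --- it is exactly this logarithmic loss that forces the final exponent $\beta$ below $\alpha$.
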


Theorem \ref{global-reg} extends our previous global $C^{1,\alpha}$ estimates 
for solutions to the linearized Monge-Amp\`ere equations
with bounded 
right hand side and $C^{1, 1}$ boundary data \cite[Theorem 2. 5 and Remark 7.1]{LS1}. It is also the global counterpart of Guti\'errez-Nguyen's interior $C^{1,\alpha}$
estimates for the linearized Monge-Amp\`ere equations. If we assume $\varphi$ to be more regular, say $\varphi \in W^{2, q}(\Omega)$ where $q>p$,
then Theorem \ref{global-reg} is a consequence of the global $W^{2, p}$ estimates for solutions to the linearized Monge-Amp\`ere 
equations \cite[Theorem 1. 2]{LN}. In this case, the proof in \cite{LN} is quite involved. Our proof of Theorem \ref{global-reg} here is much simpler.
\begin{remark}
The estimates in Theorem \ref{global-reg} can be improved to  
\begin{equation}
 \|u\|_{C^{1, \beta} (\overline \Omega )} \leq K( \|\varphi\|_{C^{1,\gamma}(\p \Omega)}+ \|f/tr~ \Phi\|_{L^p(\Omega)}).
\label{improved-est}
 \end{equation}
This follow easily from the estimates in Theorem \ref{global-reg} and the global $W^{2,p}$ estimates for solutions to the standard Monge-Amp\`ere equations with continuous right hand side
 \cite{S3}. Indeed, since $$tr ~\Phi\geq n(\det \Phi)^{\frac{1}{n}}\geq n \lambda^{\frac{n-1}{n}},$$  we also have 
 $f/tr ~\Phi\in L^{p} (\Omega)$. Fix $q\in (n, p)$, then by \cite{S3}, $tr~ \Phi\in L^{\frac{pq}{p-q}}(\Omega)$. 
 Now apply the estimates in Theorem \ref{global-reg} to $f\in L^{q}(\Omega)$ and then use H\"older inequality to 
 $f= (f/tr~ \Phi) (tr~ \Phi)$ to obtain (\ref{improved-est}).
\end{remark}
\begin{remark}
The linearized Monge-Amp\`ere operator $L_{\phi}:= \Phi^{ij}\p_{ij}$ with $\phi$ satisfying the conditions of Theorem \ref{global-reg} is in general degenerate. Here is an explicit example in two dimensions, taken from \cite{Wang1}, showing that $L_{\phi}$ is not uniformly elliptic in $\overline{\Omega}.$ Consider
$$\phi (x, y)=\frac{x^2}{log |log (x^2 + y^2)|} + y^2 log|log (x^2 + y^2)| $$
in a small ball $\Omega= B_{\rho}(0)\subset R^2$ around the origin. Then $\phi \in C^{0,1}(\overline 
\Omega) 
\cap 
C^2(\Omega)$ is strictly convex with
$$\det D^2 \phi(x, y) = 4 + O (\frac{log|log(x^2 + y^2)|}{log(x^2 + y^2)})\in C(\overline{\Omega})$$
and $\phi$ has smooth boundary data on $\p\Omega$. The quadratic separation of $\phi$ from its tangent planes on $\p\Omega$ can be readily checked (see also \cite[Proposition 3.2]{S2}). However $\phi\not\in W^{2,\infty}(\Omega).$
\end{remark}
\begin{remark}
For the global $C^{1,\alpha}$ estimates in Theorem \ref{global-reg}, the condition $p>n$ is sharp, since even in the uniformly elliptic case (for example, when $\phi(x)= \frac{1}{2}|x|^2$, $L_{\phi}$ is the Laplacian), the global $C^{1,\alpha}$ estimates fail when $p=n$.
\end{remark}

We prove Theorem \ref{h-bdr-gradient} using the perturbation arguments in the spirit of Caffarelli \cite{C, CC} (see also Wang \cite{Wang})
in combination with our previous boundary H\"older gradient estimates for the case of bounded right hand side $f$ and $C^{1,1}$ boundary data \cite{LS1}.

The next section will provide the proof of Theorem \ref{h-bdr-gradient}. The proof of Theorem \ref{global-reg} will be given in the final section, Section 
\ref{proof-sec}.
\section{Boundary H\"older gradient estimates}

In this section, we prove Theorem \ref{h-bdr-gradient}. {\it We will use the letters $c, C$ to denote generic constants depending only on 
the structural constants $n, p, \rho, \gamma, \lambda, \Lambda$ that may 
change from line to line.}

Assume $\phi$ and $\Omega$ satisfy the assumptions 
\eqref{om_ass}-\eqref{eq_u1}. We can also assume that $\phi(0)=0$ and $\nabla \phi(0)=0.$ 
By the Localization Theorem for solutions to the Monge-Amp\`ere equations proved in \cite{S1, S2}, there exists a small constant $k$ 
depending only on $n, \rho, \lambda, \Lambda$ such that if $h\leq k$ then 
\begin{equation}kE_h\cap \overline{\Omega}\subset S_{\phi}(0, h)\subset k^{-1} E_h\cap \overline{\Omega}\
 \label{loc-k}
\end{equation}
where
$$E_h:= h^{1/2}A_h^{-1} B_1$$
with $A_h$ being a linear transformation (sliding along the $x_n=0$ plane) 
\begin{equation}A_h(x) = x- \tau_h x_n,~ \tau_h\cdot e_n =0, ~\det A_h =1\
 \label{Amap}
\end{equation}
and
$$|\tau_h|\leq k^{-1}\abs{log h}.$$
We define the following rescaling of $\phi$
\begin{equation}\phi_h(x):= \frac{\phi(h^{1/2} A^{-1}_h x)}{h}
 \label{phi-h}
\end{equation}
in \begin{equation}\Omega_h:= h^{-1/2}A_h\Omega.
    \label{omega-h}
   \end{equation}
Then
$$\lambda \leq \det D^2 \phi_h(x)= \det D^2 \phi(h^{1/2}A_{h}^{-1}x)\leq \Lambda$$
and
$$B_{k}\cap \overline{\Omega_h}\subset S_{\phi_h}(0, 1)= h^{-1/2} A_{h}S_h\subset B_{k^{-1}}\cap \overline{\Omega_h}.$$
Lemma 4. 2 in \cite{LS1} implies that if $h, r\leq c$ small then $\phi_h$ satisfies in $S_{\phi_h}(0, 1)$ the hypotheses of the Localization Theorem
\cite{S1, S2}
at all $x_0\in S_{\phi_h}(0, r)\cap\p S_{\phi_h}(0, 1).$ In particular, there exists $\tilde\rho$ small, depending only on $n, \rho, \lambda, \Lambda$ such that
if $x_0\in S_{\phi_h}(0, r)\cap\p S_{\phi_h}(0, 1)$ then
\begin{equation}
 \tilde\rho\abs{x-x_{0}}^2 \leq \phi_h(x)- \phi_h(x_{0})-\nabla \phi_h(x_{0}) (x- x_{0}) \leq 
\tilde\rho^{-1}\abs{x-x_{0}}^2,
\label{loc-h}
\end{equation}
for all $x \in \p S_{\phi_h}(0, 1).$ We fix $r$ in what follows.

Our previous boundary H\"older gradient estimates \cite{LS1} for solutions to the linearized Monge-Amp\`ere with 
bounded 
right hand side and $C^{1, 1}$ boundary data hold in $S_{\phi_h}(0, r)$. They will play a crucial role in the perturbation arguments
and we now recall them here.
\begin{theorem}(\cite[Theorem 2.1 and Proposition 6.1]{LS1})
Assume $\phi$ and $\Omega$ satisfy the assumptions 
\eqref{om_ass}-\eqref{eq_u1} 
above. Let $u: S_r\cap 
\overline{\Omega}\rightarrow \R$ be a continuous solution to 
\begin{equation*}
 \left\{
 \begin{alignedat}{2}
   \Phi^{ij}u_{ij} ~& = f ~&&\text{in} ~ S_r\cap \Omega, \\\
u &= 0~&&\text{on}~\p \Omega \cap S_r,
 \end{alignedat} 
  \right.
\end{equation*} 
where $f\in L^{\infty}(S_r\cap\Omega)$.
Then 
$$|\p_n u(0)|\leq C_0 \left(\|u\|_{L^{\infty}(S_r\cap\Omega)} + \|f\|_{L^{\infty}(S_r\cap\Omega)}\right)$$
and for $s\leq r/2$
$$\max_{S_r}|u-\p_n u(0)x_n|\leq C_0 (s^{1/2})^{1+\alpha_0}\left(\|u\|_{L^{\infty}(S_r\cap\Omega)} + \|f\|_{L^{\infty}(S_r\cap\Omega)} \right)$$
where $\alpha_0\in (0, 1)$ and $C_0$ are constants depending only on $n, \rho,
 \lambda, \Lambda $.

\label{LS-gradient}
\end{theorem}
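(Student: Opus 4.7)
The plan is to prove Theorem \ref{LS-gradient} by a pointwise boundary $C^{1,\alpha_0}$ iteration at the origin along a geometric sequence of sections $\{S_{\theta^k r}\}$, reduced to a compactness-based approximation lemma. The Localization Theorem of \cite{S1,S2} supplies the correct affine rescaling (\ref{phi-h}) and shows, via (\ref{loc-k})--(\ref{loc-h}), that each section rescaled by $h^{-1/2}A_h$ becomes a normalized convex set on which $\phi_h$ again satisfies (\ref{om_ass})--(\ref{eq_u1}) with uniform constants; this is the structural reason an essentially affine-invariant iteration works.

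I would first establish the boundary Lipschitz bound $|\partial_n u(0)| \leq C_0(\|u\|_{L^\infty(S_r)} + \|f\|_{L^\infty(S_r)})$. The Alexandrov--Bakelman--Pucci inequality applied to $\Phi^{ij}\partial_{ij}$ (whose trace is bounded below by $n\lambda^{(n-1)/n}$, hence non-degenerate in the integral sense) gives a global $L^\infty$ bound. For the one-sided Lipschitz control at $0$ I would use the quadratic separation (\ref{eq_u1}) together with the identity $\Phi^{ij}\phi_{ij} = n\det D^2\phi \leq n\Lambda$: the function $w(x) = A x_n + B(\phi(x) - \phi(0))$ is a supersolution in $S_r \cap \Omega$ that dominates $u$ on $\partial(S_r \cap \Omega)$ provided $A, B$ are chosen large, yielding $u(t e_n) \leq C t$; a mirror construction gives the two-sided bound.

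The heart of the argument is an approximation lemma: for every $\varepsilon > 0$ there exist $\delta, \theta \in (0,1)$ depending only on $n, \rho, \lambda, \Lambda$ such that whenever $\phi, \Omega$ satisfy the standing hypotheses and $u$ solves the linearized Monge--Amp\`ere equation with $\|u\|_{L^\infty(S_1)} \leq 1$, zero boundary data on $\partial \Omega \cap S_1$, and $\|f\|_{L^\infty(S_1)} \leq \delta$, there is a number $a$ with $|a| \leq C$ and
\[
\|u - a\, x_n\|_{L^\infty(S_\theta)} \leq \varepsilon\, \theta^{1/2}.
\]
I would prove this by contradiction and compactness: a failing sequence $(\phi_k, \Omega_k, u_k, f_k)$ with $\|f_k\|_\infty \to 0$ has, along a subsequence, limits $\phi_\infty, \Omega_\infty, u_\infty$ in which $u_\infty$ solves the homogeneous linearized Monge--Amp\`ere equation for $\phi_\infty$ and vanishes on the portion of $\partial \Omega_\infty$ tangent to $\{x_n = 0\}$. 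Interior $C^{1,\alpha}$ estimates of Caffarelli--Guti\'errez together with a boundary Hölder estimate for the degenerate operator $\Phi_\infty^{ij}\partial_{ij}$ force $u_\infty(x) = \partial_n u_\infty(0)\, x_n + O(|x|^{1+\alpha_0})$ near $0$, contradicting the assumed failure of the approximation at scale $\theta$.

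The iteration closes the proof. Applying the approximation lemma to the rescaled solution on the normalized section $S_{\phi_h}(0,1)$ produced by (\ref{phi-h})--(\ref{omega-h}) and transferring the hypotheses to $\phi_h$ via (\ref{loc-h}), one builds a Cauchy sequence of slopes $\{a_k\}$ satisfying $|a_{k+1} - a_k| \leq C\varepsilon\,(\theta^{1/2})^{k \alpha_0}$ for $\theta$ and $\alpha_0 \in (0,1)$ chosen appropriately. Telescoping yields a limit slope $a_\infty = \partial_n u(0)$ together with the quantitative estimate $\max_{S_s}|u - a_\infty x_n| \leq C_0 (s^{1/2})^{1+\alpha_0}(\|u\|_\infty + \|f\|_\infty)$ upon selecting $k$ with $\theta^k r \asymp s$. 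The main obstacle is the compactness step of the approximation lemma: one must show the limiting potential $\phi_\infty$ retains the quadratic separation (\ref{eq_u1}) on the flat part of $\partial\Omega_\infty$ with a uniform constant, and then establish boundary Hölder regularity for $u_\infty$ under the possibly degenerate limit operator $\Phi_\infty^{ij}\partial_{ij}$; this is precisely where the uniform hypothesis (\ref{eq_u1}) must be exploited in a quantitative way.
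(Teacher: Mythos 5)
First, a point of orientation: the paper offers no proof of this statement --- it is quoted from \cite{LS1} (Theorem 2.1 and Proposition 6.1 there), so the only meaningful comparison is with the argument in that reference. That argument is a real-variables one: explicit barriers built from $\phi$ and $x_n$ using the Localization Theorem, the Caffarelli--Guti\'errez interior Harnack inequality, and a boundary-Harnack/oscillation-decay iteration for the quotient $u/x_n$ on shrinking sections $S_{\theta^k}$. Your scaffolding (pointwise Lipschitz bound at $0$, approximation lemma, geometric iteration) has the right shape, but two of your steps do not work as written.

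The serious gap is the compactness proof of your approximation lemma. A Caffarelli-type compactness argument must land on a limiting problem that is \emph{better understood} than the original; here it does not. From $\phi_k\to\phi_\infty$ uniformly with only $\lambda\le\det D^2\phi_k\le\Lambda$ you cannot conclude that $\Phi_k^{ij}\to\Phi_\infty^{ij}$ in any sense strong enough to pass to the limit in $\Phi_k^{ij}\partial_{ij}u_k=f_k$, since $D^2\phi_k$ is not controlled in any norm; and even granting a limit equation, your contradiction requires the expansion $u_\infty=\partial_n u_\infty(0)\,x_n+O(|x|^{1+\alpha_0})$, i.e.\ exactly the boundary $C^{1,\alpha_0}$ estimate being proven, now for a degenerate limit operator that is no more tractable than the original. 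You flag this as ``the main obstacle,'' but it is not a technical point to be smoothed over --- it is the reason \cite{LS1} avoids compactness altogether and instead proves a boundary Harnack inequality (comparability of $\sup_{S_h}u/x_n$ and $\inf_{S_h}u/x_n$) directly, from which the oscillation decay and the convergent sequence of slopes follow. Separately, your barrier $w=Ax_n+B(\phi-\phi(0))$ with $A,B>0$ large is a \emph{subsolution}, not a supersolution, because $\Phi^{ij}\phi_{ij}=n\det D^2\phi\ge n\lambda>0$; reversing the sign of $B$ to fix this destroys the comparison $w\ge u$ on $\partial S_r\cap\Omega$ near the corner $\partial S_r\cap\partial\Omega$, where $x_n\to 0$ while $u$ is only known to be bounded. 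Controlling $u$ near that corner is the actual content of the Lipschitz bound in \cite{LS1} and requires the localization/rescaling structure, not a single global barrier.
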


Now, we are ready to give the proof of Theorem \ref{h-bdr-gradient}.
\begin{proof}[Proof of Theorem \ref{h-bdr-gradient}]
Since $u|_{\p\Omega \cap B_{\rho}}$ is $C^{1,\gamma}$, by subtracting a suitable linear function we can assume that on $\p\Omega\cap B_{\rho}$, $u$
satisfies
$$\abs{u(x)}\leq M |x^{'}|^{1+\gamma}.$$
Let $$\alpha:=\min\{\gamma, 1-\frac{n}{p}\}$$ if $\alpha<\alpha_0$; otherwise let $\alpha\in (0, \alpha_0)$ where $\alpha_0$ is in Theorem \ref{LS-gradient}.
The only place where we need $\alpha<\alpha_0$ is (\ref{alpha0}).

By dividing our equation by a suitable constant we may assume that for some $\theta$ to be chosen later
$$\|u\|_{L^{\infty}(B_{\rho}\cap\Omega)} + \|f\|_{L^{p}(B_{\rho}\cap\Omega)} + M\leq (\theta^{1/2})^{1+\alpha}=: \delta.$$
{\bf Claim.} There exists $0<\theta_0<r/4$ small depending only on $n, \rho, \lambda, \Lambda, \gamma, p$, and a sequence of linear functions
$$l_m(x):= b_m x_n$$
with where $b_0= b_1 =0$ such that for all $\theta\leq\theta_0$ and for all $m\geq 1$, we have
\begin{myindentpar}{1cm}
 (i) $$\|u-l_m\|_{L^{\infty}(S_{\theta^m})}\leq (\theta^{m/2})^{1+\alpha},$$
 and\\
 (ii) $$|b_m-b_{m-1}|\leq C_0 (\theta^{\frac{m-1}{2}})^{\alpha}.$$
\end{myindentpar}
Our theorem follows from the claim. Indeed, (ii) implies that $\{l_m\}$ converges uniformly in $S_{\theta}$ to a linear function $l(x)= bx_n$ with $b$ 
universally bounded since
\begin{equation*}
\abs{b}\leq  \sum_{m=1}^{\infty} \abs{b_m-b_{m-1}}\leq \sum_{m=1}^{\infty} C_0 (\theta^{\theta/2})^{m-1}= \frac{C_0}{1-\theta^{\alpha/2}}\leq 2C_0.
\end{equation*}
Furthermore, by (\ref{loc-k}) and (\ref{Amap}), we have $|x_n|\leq k^{-1}\theta^{m/2}$ for $x\in S_{\theta^m}$. Therefore, 
for any $m\geq 1$, 
 \begin{eqnarray*}
  \|u-l\|_{L^{\infty}(S_{\theta^m})} &\leq &  \|u-l_m\|_{L^{\infty}(S_{\theta^m})} + \sum_{j=m+1}^{\infty}  \|l_j-l_{j-1}\|_{L^{\infty}(S_{\theta^m})}
  \\&\leq& (\theta^{m/2})^{1+\alpha} + \sum_{j=m+1}^{\infty}  C_0 (\theta^{\frac{j-1}{2}})^{\alpha} (k^{-1}\theta^{m/2})\\
  &\leq& C(\theta^{m/2})^{1+\alpha}.
 \end{eqnarray*}
We now prove the claim by induction. Clearly (i) and (ii) hold for $m=1$. Suppose (i) and (ii) hold up to $m\geq 1$. We prove them for $m+1$. 
Let $h= \theta^m$. We define 
the rescaled domain $\Omega_h$ and function $\phi_h$ as in (\ref{omega-h}) and (\ref{phi-h}).  We also define
for $x\in \Omega_h$
$$v(x):= \frac{(u-l_m)(h^{1/2} A^{-1}_h x)}{h^{\frac{1+\alpha}{2}}}, ~f_h(x):
= h^{\frac{1-\alpha}{2}}f(h^{1/2} A^{-1}_h x).$$
Then
$$\|v\|_{L^{\infty}(S_{\phi_h}(0,1))}= \frac{1}{h^{\frac{1+\alpha}{2}}}\|u-l_m\|_{L^{\infty}(S_h)}\leq 1$$
and
$$\Phi_h^{ij}v_{ij}=f_h~\text{in}~S_{\phi_h}(0,1)$$
with
$$\|f_h\|_{L^{p}(S_{\phi_h}(0,1))} = (h^{1/2})^{1-\alpha-n/p} \|f\|_{L^{p}(S_h)}\leq \delta.$$
Let $w$ be the solution to
\begin{equation*}
 \left\{
 \begin{alignedat}{2}
   \Phi_h^{ij}w_{ij} ~& = 0 ~&&\text{in} ~ S_{\phi_h}(0, 2\theta), \\\
w &= \varphi_h~&&\text{on}~\p S_{\phi_h}(0, 2\theta),
 \end{alignedat} 
  \right.
\end{equation*} 
where 
\begin{equation*}
 \varphi_h = \left\{\begin{alignedat}{1}
                  0 ~&~ \text{on} ~\p S_{\phi_h}(0, 2\theta)\cap\p\Omega_h \\
                  v~& ~\text{on}~ \p S_{\phi_h}(0, 2\theta) \cap \Omega_h.
                 \end{alignedat}
\right.
\end{equation*}
By the maximum principle, we have
$$\|w\|_{L^{\infty}(S_{\phi_h}(0, 2\theta))}\leq \|v\|_{L^{\infty}(S_{\phi_h}(0, 2\theta))}\leq 1.$$
Let 
$$\bar{l}(x):= \bar{b}x_n;~\bar{b}:=\partial_n w(0).$$
Then the boundary H\"older gradient estimates in Theorem \ref{LS-gradient} give
\begin{equation}\abs{\bar{b}}\leq C_0 \|w\|_{L^{\infty}(S_{\phi_h}(0, 2\theta))}\leq C_0
 \label{b-bar}
\end{equation}
and
\begin{eqnarray}\|w-\bar{l}\|_{L^{\infty}(S_{\phi_h}(0, \theta))} \leq C_0\|w\|_{L^{\infty}(S_{\phi_h}(0, 2\theta))} (\theta^{\frac{1}{2}})^{1+\alpha_0}
&\leq& C_0 (\theta^{\frac{1}{2}})^{1+\alpha_0}\nonumber\\
&\leq& \frac{1}{2}(\theta^{\frac{1}{2}})^{1+\alpha},
\label{alpha0}
\end{eqnarray}
provided that
 $$C_0\theta_0^{\frac{\alpha_0-\alpha}{2}}\leq 1/2.$$
We will show that, by choosing $\theta\leq \theta_0$ where $\theta_0$ is small, we have
\begin{equation}\|w-v\|_{L^{\infty}(S_{\phi_h}(0, 2\theta))} \leq \frac{1}{2}(\theta^{\frac{1}{2}})^{1+\alpha}.
 \label{w-eq}
\end{equation}
Combining this with (\ref{alpha0}), we obtain
$$\|v-\bar{l}\|_{L^{\infty}(S_{\phi_h}(0, \theta))}\leq (\theta^{\frac{1}{2}})^{1+\alpha}.$$
Now, let
$$l_{m+1}(x):= l_m(x) + (h^{1/2})^{1+\alpha} \bar{l}(h^{-1/2}A_h x).$$
Then, for $x\in S_{\theta^{m+ 1}}= S_{\theta h}$, we have $h^{-1/2}A_h x\in S_{\phi_h}(0,\theta)$
and 
$$(u-l_{m+1})(x) = u(x)- l_m(x) - (h^{1/2})^{1+\alpha} \bar{l}(h^{-1/2}A_h x)= (h^{1/2})^{1+\alpha}(v- \bar{l})(h^{-1/2}A_h x).$$
Thus
$$\|u-l_{m+1}\|_{L^{\infty}(S_{\theta^{m+1}})} = (h^{1/2})^{1+\alpha}\|v- \bar{l}\|_{L^{\infty}(S_{\phi_h}(0,\theta))}\leq 
 (h^{1/2})^{1+\alpha}  (\theta^{1/2})^{1+\alpha}=  (\theta^{\frac{m+1}{2}})^{1+\alpha},$$
 proving (i). On the other hand, we have
 $$l_{m+1} (x) = b_{m+1}x_n$$
 where, by (\ref{Amap}) $$b_{m+1}:= b_m + (h^{1/2})^{1+\alpha} h^{-1/2} \bar{b} = b_m + h^{\alpha/2}\bar{b}.$$
 Therefore, the claim is established since (ii) follows from (\ref{b-bar}) and
 $$\abs{b_{m+1}-b_m}= h^{\alpha/2}\abs{\bar{b}}\leq C_{0}\theta^{m\alpha/2}.$$
 It remains to prove (\ref{w-eq}). We will use the ABP estimate to $w-v$ which solves
 \begin{equation*}
 \left\{
 \begin{alignedat}{2}
   \Phi_h^{ij}(w-v)_{ij} ~& = -f_h ~&&\text{in} ~ S_{\phi_h}(0, 2\theta), \\\
w-v &= \varphi_h-v~&&\text{on}~\p S_{\phi_h}(0, 2\theta).
 \end{alignedat} 
  \right.
\end{equation*} 
By this estimate and the way $\varphi_h$ is defined, we have
\begin{multline*}\|w-v\|_{L^{\infty}(S_{\phi_h}(0, 2\theta))} \leq \|v\|_{L^{\infty}(\p S_{\phi_h}(0, 2\theta)\cap \p\Omega_h)}
+ C(n) diam (S_{\phi_h}(0, 2\theta)) \|\frac{f_h}{(\det \Phi_h)^{\frac{1}{n}}}\|_{L^{n}(S_{\phi_h}(0, 2\theta))}\\=: (I) + (II).
\end{multline*}
To estimate (I), we denote $y= h^{1/2}A_{h}^{-1}x$ when $x\in \p S_{\phi_h}(0, 2\theta)\cap \p\Omega_h.$
Then $y\in \p S_{\phi}(0, 2\theta)\cap\p\Omega$ and moreover,
$$y_n = h^{1/2}x_n,~ y^{'}-\nu_h y_n = h^{1/2} x^{'}.$$
Noting that $x\in  \p S_{\phi_h}(0, 1)\cap \p\Omega_h\subset B_{k^{-1}}$, we have by (\ref{Amap})
$$\abs{y}\leq k^{-1}h^{1/2}\abs{log h}\abs{x} \leq h^{1/4}\leq \rho$$
if $h=\theta^m$ is small. This is clearly satisfied when $\theta_0$ is small. 

Since $\Omega$ has an interior tangent ball of radius $\rho$, we have
$$\abs{y_n}\leq \rho^{-1}|y^{'}|^2.$$
Therefore
$$\abs{v_h y_n}\leq k^{-1}\abs{log h} \rho^{-1} |y^{'}|^2 \leq k^{-1}\rho^{-1} h^{1/4}\abs{log h} |y^{'}|\leq \frac{1}{2}|y^{'}|$$
and consequently,
$$\frac{1}{2}|y^{'}|\leq |h^{1/2} x^{'}|\leq \frac{3}{2}|y^{'}|.$$
From (\ref{loc-h})
$$\tilde\rho |x^{'}|^2 \leq \phi_h(x)\leq 2\theta,$$ we have
$$|y^{'}|\leq 2 h^{1/2} |x^{'}|\leq 2(2\tilde\rho^{-1})^{1/2} (\theta h)^{1/2}.$$
By (ii) and $b_0=0$, we have
$$\abs{b_m}\leq \sum_{j=1}^{m}\abs{b_j-b_{j-1}}\leq \sum_{j=1}^{\infty} C_0 (\theta^{\theta/2})^{j-1}= \frac{C_0}{1-\theta^{\alpha/2}}\leq 2C_0$$
if $$\theta_0^{\alpha/2}\leq 1/2.$$
Now, we obtain from the definition of $v$ that
$$h^{\frac{1+\alpha}{2}} |v(x)| = |(u-l_m)(y)| \leq \abs{u(y)} + 2C_0\abs{y_n} \leq \delta |y^{'}|^{1+\gamma} + 2C_0 \rho^{-1} |y^{'}|^2
= |y^{'}|^{1+\gamma}(\delta + 2C_0 \rho^{-1} |y^{'}|^{1-\gamma}).$$
Using $|y^{'}|\leq C \theta^{1/2}$ and $\gamma\geq \alpha$, we find
$$v(x) \leq \frac{C ((\theta h)^{1/2})^{1+\gamma}(\delta +\theta^{\frac{1-\gamma}{2}})}{h^{\frac{1+\alpha}{2}}} = Ch^{\gamma-\alpha}\theta^{\frac{1+\gamma}{2}} (\theta^{\frac{1+\alpha}{2}} +\theta^{\frac{1-\gamma}{2}})\leq Ch^{\gamma-\alpha}\theta\leq
\frac{1}{4} (\theta^{1/2})^{1+\alpha}$$
if $\theta_0$ is small. We then obtain
$$(I) \leq \frac{1}{4} (\theta^{1/2})^{1+\alpha}.$$
To estimate (II), we recall $\delta = (\theta^{1/2})^{1+\alpha}$
and
$$S_{\phi_h}(0, 2\theta)\subset B_{C(2\theta)^{1/2} \abs{log 2\theta}};~ \abs{S_{\phi_h}(0, 2\theta)}\leq C(2\theta)^{n/2}.$$
Since $$\det \Phi_h = (\det D^2\phi_h)^{n-1}\geq \lambda^{n-1},$$ we therefore obtain from H\"older inequality that
\begin{eqnarray*}
 (II) &\leq& \frac{ C(n)}{\lambda^{\frac{n-1}{n}}} diam (S_{\phi_h}(0, 2\theta))
 \|f_h\|_{L^{n}(S_{\phi_h}(0, 2\theta))}\\
 &\leq& C(n, \lambda)diam (S_{\phi_h}(0, 2\theta)) \abs{S_{\phi_h}(0, 2\theta)}^{\frac{1}{n}-\frac{1}{p}}
 \| f_h\|_{L^{p}(S_{\phi_h}(0, 2\theta))} \\
 &\leq & C\delta \theta^{1/2}\abs{log 2\theta} (\theta^{1/2})^{1-n/p}= C(\theta^{1/2})^{1+\alpha}
 \abs{log 2\theta} (\theta^{1/2})^{2-n/p}\leq \frac{1}{4} (\theta^{1/2})^{1+\alpha} 
\end{eqnarray*}
if $\theta_0$ is small. It follows that
$$\|w-v\|_{L^{\infty}(S_{\phi_h}(0, 2\theta))} \leq (I) + (II)\leq \frac{1}{2}(\theta^{\frac{1}{2}})^{1+\alpha},$$
proving (\ref{w-eq}). The proof of our theorem is complete.
\end{proof}

\section{Global $C^{1,\alpha}$ estimates}
\label{proof-sec}
In this section, we will prove Theorem \ref{global-reg}.

\begin{proof}[Proof of Theorem \ref{global-reg}] We extend $\varphi$ to a $C^{1,\gamma}(\overline{\Omega})$ function in $\overline{\Omega}$.
By the ABP estimate, we have
 \begin{equation}
  \label{u-max}
  \norm{u}_{L^{\infty}(\Omega)} \leq C \left(\norm{f}_{L^{p}(\Omega)} + \|\varphi\|_{L^{\infty}(\overline{\Omega})}\right)
 \end{equation}
for some $C$ depending on $n, p, \rho, \lambda$. By multiplying $u$ by a suitable constant, we can assume that $$\norm{f}_{L^{p}(\Omega)}
+ \|\varphi\|_{C^{1,\gamma}(\overline{\Omega})}=1.$$

By using Guti\'errez-Nguyen's interior $C^{1,\alpha}$ estimates \cite{GN} and restricting our estimates in small balls of definite size around $\p\Omega$, we can assume throughout
 that $1-\e\leq g\leq 1+ \e$ where $\e$ is as in Theorem \ref{h-bdr-gradient}.
 
 Let $y\in \Omega $ with $r:=dist (y,\partial\Omega) \le c,$ for $c$ universal, and consider the maximal section $S_{\phi}(y, h)$ of $\phi$ centered at $y$, i.e.,
$$h=\sup\{t\,| \quad S_{\phi}(y,t)\subset \Omega\}.$$
Since $\phi$ is $C^{1, 1}$ on the boundary $\p\Omega$, by Caffarelli's strict convexity theorem, $\phi$ is strictly convex in $\Omega$. This implies the existence of the above maximal section $S_{\phi}(y, h)$ of $\phi$ centered at $y$ with $h>0$. 
By \cite[Proposition 3.2]{LS1} applied at the point $x_0\in \p S_{\phi}(y,h) \cap \p \Omega,$ we have
 \begin{equation} h^{1/2} \sim r,
\label{hr}
\end{equation}
and $ S_{\phi}(y,h)$ is equivalent to an ellipsoid $E$ i.e
$$cE \subset  S_{\phi}(y,h)-y \subset CE,$$
where
\begin{equation}E :=h^{1/2}A_{h}^{-1}B_1, \quad \mbox{with} \quad \|A_{h}\|, \|A_{ h}^{-1} \| \le C |\log  h|; \det A_{h}=1.
\label{eh}
\end{equation}
We denote $$\phi_y:=\phi-\phi(y)-\nabla \phi(y) (x-y).$$
The rescaling $\tilde \phi: \tilde S_1 \to \R$ of $u$ 
$$\tilde \phi(\tilde x):=\frac {1}{ h} \phi_y(T \tilde x) \quad \quad x=T\tilde x:=y+ h^{1/2}A_{h}^{-1}\tilde x,$$
satisfies
$$\det D^2\tilde \phi(\tilde x)=\tilde g(\tilde x):=g(T \tilde x),  $$
and
\begin{equation}
\label{normalsect}
B_c \subset \tilde S_1 \subset B_C, \quad \quad \tilde S_1=\bar h^{-1/2} A_{\bar h}(S_{y, \bar h}- y),
\end{equation}
where $\tilde S_1:= S_{\tilde \phi} (0, 1)$ represents the section of $\tilde \phi$ at the origin at height 1.

We define also the rescaling $\tilde u$ for $u$
$$\tilde u(\tilde x):= h^{-1/2}\left(u(T\tilde x)- u(x_{0})-\nabla u(x_0)(T\tilde x-x_0)\right),\quad \tilde x\in \tilde S_{1}.$$
Then $\tilde u$ solves
$$\tilde \Phi^{ij} \tilde u_{ij} = \tilde f(\tilde x):= h^{1/2} f(T\tilde x).$$
Now, we apply Guti\'errez-Nguyen's interior $C^{1,\alpha}$ estimates \cite{GN} to $\tilde u $ to obtain
$$\abs{D\tilde u (\tilde z_{1})-D\tilde u(\tilde z_{2})}\leq C\abs{\tilde z_{1}-\tilde z_{2}}^{\beta} \{\norm{\tilde u }_{L^{\infty}(\tilde S_{1})} + \norm{\tilde f}_{L^{p}(\tilde S_{1})}\},\quad\forall \tilde z_{1}, \tilde z_{2}\in \tilde S_{1/2},$$
for some small constant $\beta\in (0,1)$ depending only on $n, \lambda, \Lambda$.\\
By (\ref{normalsect}), we can decrease $\beta$ if necessary and thus we can assume that
$2\beta\leq \alpha$
where $\alpha\in (0,1)$ is the exponent in Theorem \ref{h-bdr-gradient}.
Note that, by (\ref{eh})
\begin{equation}
 \norm{\tilde f}_{L^{p}(\tilde S_{1})} = h^{1/2-\frac{n}{2p}}\norm{f}_{L^{p}(S_{y, \bar{h}})}.
 \label{scaled-lp}
 \end{equation}
We observe that (\ref{hr}) and (\ref{eh}) give
$$B_{C r\abs{log r}}(y)\supset  S_{\phi}(y,h) \supset  S_{\phi}(y,h/2)\supset B_{c\frac{r}{\abs{log r}}}(y)$$
and
$$diam ( S_{\phi}(y,h))\leq Cr\abs{log r}.$$
By Theorem \ref{h-bdr-gradient} applied to the original function $u$, (\ref{u-max}) and (\ref{hr}), we have
$$\norm{\tilde u }_{L^{\infty}(\tilde S_{1})} \leq C h^{-1/2} \left(\|u\|_{L^{\infty}(\Omega)} +\norm{f}_{L^{p}(\Omega)}
+ \|\varphi\|_{C^{1,\gamma}(\overline{\Omega})}\right)diam ( S_{\phi}(y,h))^{ 1+ \alpha} \leq
C  r^{\alpha}\abs{log r}^{1 +\alpha}.$$
Hence, using (\ref{scaled-lp}) and the fact that $\alpha\leq 1/2 (1-n/p)$, we get
$$\abs{D\tilde u (\tilde z_{1})-D\tilde u(\tilde z_{2})}\leq C\abs{\tilde z_{1}-\tilde z_{2}}^{\beta}
r^{\alpha}\abs{log r}^{1 +\alpha}~\forall \tilde z_{1}, \tilde z_{2}\in \tilde S_{1/2}.$$
 Rescaling back and using
$$\tilde z_1-\tilde z_2= h^{-1/2}A_{ h}(z_1-z_2),\quad h^{1/2}\sim r,$$
and the fact that
$$\abs{\tilde z_1-\tilde z_2}\leq \norm{ h^{-1/2}A_{ h}}\abs{z_1-z_2} \leq C h^{-1/2}\abs{\log h}\abs{z_1-z_2}\leq
C r^{-1}\abs{log r}\abs{z_1-z_2},$$
we find
\begin{eqnarray}|Du(z_1)-Du( z_2)| &=&|A_{h}(D\tilde u (\tilde z_{1})-D\tilde u(\tilde z_{2})|  \leq C\abs{\log h}(r^{-1}\abs{log r}\abs{z_1-z_2})^{\beta} 
r^{\alpha}\abs{log r}^{1 +\alpha} \nonumber\\&
\le&  |z_1-z_2|^{\beta} \quad \forall  z_1, z_2 \in   S_{\phi}(y,h/2).
\label{oscv}
\end{eqnarray}
Notice that this inequality holds also in the Euclidean ball $B_{c\frac{r}{\abs{log r}}}(y)\subset  S_{\phi}(y,h/2)$. Combining this with 
Theorem \ref{h-bdr-gradient}, we easily 
obtain  $$[Du]_{C^\beta(\bar \Omega)} \le C$$
and the desired global $C^{1,\beta}$ bounds for $u$.
\end{proof}
{\bf Acknowledgments.} The authors would like to thank the referee for constructive comments on the manuscript. The first author was partially supported by the Vietnam Institute for Advanced Study in Mathematics (VIASM), Hanoi, Vietnam. 

\end{document}